\theoremstyle{plain}
\newtheorem{theorem}{Theorem}
\numberwithin{equation}{section}
\newcommand{\LL}{\mathbb{L}}
\newcommand{\mtd}{\medtriangledown}
\newcommand{\dagg}{\; \dagger \:}
\begin{document}

\title {Projective Space: Tetrads and Harmonicity}

\date{}

\author[P.L. Robinson]{P.L. Robinson}

\address{Department of Mathematics \\ University of Florida \\ Gainesville FL 32611  USA }

\email[]{paulr@ufl.edu}

\subjclass{} \keywords{}

\begin{abstract}

Within an axiomatic framework for three-dimensional projective space based on lines alone, we explore the Fano axiom of harmonicity according to which the diagonal lines of a complete quadrilateral are not concurrent. 

\end{abstract}

\maketitle

\medbreak

\section*{Introduction} 

In the early history of projective geometry, it was taken for granted that the diagonal points of a complete quadrangle were not collinear; indeed they were not, in the projective spaces that were then under consideration. Once proper attention was paid to the foundations of projective geometry, it was recognized that the noncollinearity of these diagonal points had to be explicitly taken as axiomatic. Thus, in the classic treatise [3] of Veblen and Young there appears on page 45 the following `Fano' axiom. 
\medbreak 
ASSUMPTION H$_0$. The diagonal points of a complete quadrangle are not collinear. 
\medbreak 
Here, the indefinite article may be understood in either the universal sense or the existential sense: in context, if one complete quadrangle has noncollinear diagonal points, then so have all. The main purpose of Assumption H$_0$ is to support the important notion of harmonic set; under the contrary assumption, harmonic sets collapse  and any element of a one-dimensional primitive form is then its own harmonic conjugate with respect to any two other elements. Accordingly, we may refer to Assumption H$_0$ as the axiom of {\it harmonicity}. 

\medbreak 

In [1] we presented a purely `linear' axiomatic framework for three-dimensional projective space, founded only on the notions of line and abstract incidence, with point and plane as derived notions; there we showed that our `linear' axiomatic framework is equivalent to the classical Veblen-Young framework with its axioms of alignment and extension. In [2] we explored the Veblen-Young axiom of projectivity within our `linear' axiomatic framework, the appropriate version of this axiom being phrased in terms of reguli. Here we explore the appropriate axiom of harmonicity within our `linear' axiomatic framework, demonstrating that its inclusion as an axiom does not disturb the crucial principle of duality. 

\medbreak 

\section*{Tetrads} 

\medbreak 

We begin by reviewing briefly the axiomatic framework of [1] essentially as recalled in [2]. The set $\LL$ of {\it lines} is provided with a symmetric reflexive relation $\dagg$ of {\it incidence} that satisfies AXIOM [1] - AXIOM [4] below. For convenience, when $S \subseteq \LL$ we write $S^{\dagg}$ for the set comprising all lines that are incident to each line in $S$; in case $S = \{ l_1, \dots , l_n \}$ we write $S^{\dagg} = [ l_1 \dots  l_n ]$. Further, when the lines $a, b \in \LL$ are not incident we call them {\it skew}.

\medbreak 

AXIOM [1]: For each line $l$ the set $l^{\dagg}$ contains three pairwise skew lines. 

\medbreak 

AXIOM [2]: For each incident pair of distinct lines $a, b:$ \par
\indent \indent \indent \indent \indent [2.1]  the set $[a b]$ contains skew pairs of lines; \par 
\indent \indent \indent \indent \indent [2.2]  if $c \in [a b] \setminus [a b]^{\dagg}$ is one of such a skew pair then no skew pairs lie in $[a b c]$; \par 
\indent \indent \indent \indent \indent [2.3] if $x, y$ is a skew pair in $[ab]$ then $[a b] = [a b x] \cup [a b y].$

\medbreak 

AXIOM [3]: If $a, b$ is an incident line pair and $c \in [a b] \setminus [a b]^{\dagg}$ then there exist an incident line pair $p, q$ and an $r \in [p q] \setminus [p q]^{\dagg}$ such that $[a b c] \cap [p q r] = \emptyset.$

\medbreak 

AXIOM [4]: Whenever $a, b$ and $p, q$ are pairs of distinct incident lines, 
$$(a \upY b) \cap (p \upY q) \neq \emptyset, \; \; \; 
(a \mtd b) \cap (p \mtd q) \neq \emptyset.$$

\medbreak 

Regarding this last axiom, we remark that on the set $\Sigma (a, b) = [a b] \setminus [a b]^{\dagg}$ (comprising all lines that are one of a skew pair in $[a b]$) incidence restricts to an equivalence relation having two equivalence classes, which we denote by $\Sigma_{\upY} (a, b)$ and $\Sigma_{\mtd} (a, b)$: the {\it point} $a \upY b = [a b c_{\upY} ]$ does not depend on the choice of $c_{\upY} \in \Sigma_{\upY} (a, b)$; likewise, the {\it plane} $a \mtd b = [a b c_{\mtd}]$ is independent of $c_{\mtd} \in \Sigma_{\mtd} (a, b)$. 

\medbreak 

By a {\it triad} we mean three pairwise-incident lines $a, b, c$ that satisfy any (hence each) of the equivalent conditions
$$a \in \Sigma (b, c), \; b \in \Sigma (c, a), \; c \in \Sigma (a, b).$$
Triads come in precisely two types: $\mtd$-triads, for which 
$$a \in \Sigma_{\mtd} (b, c), \; b \in \Sigma_{\mtd} (c, a), \; c \in \Sigma_{\mtd} (a, b);$$
$\upY$-triads, for which
$$a \in \Sigma_{\upY} (b, c), \; b \in \Sigma_{\upY} (c, a), \; c \in \Sigma_{\upY} (a, b).$$

\medbreak 

We shall frequently use the terminology of classical projective geometry. For instance, we may say that a point and a plane are {\it incident} when their intersection is nonempty; more generally, a collection of points and planes will be called {\it collinear} when they contain a common line. Also, lines will be called {\it coplanar} when they all lie in one plane: that is, when they are all elements of $a \mtd b$ for some incident $a \ne b \in \mathbb{L}$. Dually, lines will be called {\it copunctal} (or concurrent) when they all pass through one point: that is, when they are all elements of $a \upY b$ for some incident $a \ne b \in \mathbb{L}$. Thus, the lines of a $\mtd$-triad are coplanar but not copunctal, those of a $\upY$-triad copunctal but not coplanar. 

\medbreak 

After this brief review, we now turn our attention to sets of four lines, assumed pairwise-incident. It is of course possible that four pairwise-incident lines $o, p, q, r$ include no triad: in such a case, $o \in (p \upY q) \cap (p \mtd q) \ni r$ so that $o, p, q, r$ lie in the flat pencil comprising all lines through the point $p \upY q$ in the plane $p \mtd q$. Our interest lies in the complementary case that a triad is included. 

\medbreak 

\begin{theorem} \label{pretetrad}
If the pairwise-incident lines $o, p, q, r$ include a $\mtd$-triad, then they are coplanar and do not include a $\upY$-triad. 
\end{theorem}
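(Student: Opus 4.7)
The plan is to reduce the statement to the definitional identification of the plane $p \mtd q$ with a certain set of lines of the form $[p q r]$, after which the assertion about $o$ becomes essentially automatic.

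First, I would relabel (without loss of generality) so that $p, q, r$ is the $\mtd$-triad contained in $\{o, p, q, r\}$. By the definition of a $\mtd$-triad, $r \in \Sigma_{\mtd}(p, q)$, and the independence statement recalled in the excerpt gives
\[
p \mtd q \;=\; [p q c_{\mtd}] \;=\; [p q r]
\]
upon taking $c_{\mtd} = r$. Note too that each of $p, q, r$ is itself a member of the set $[p q r]$, since $\dagg$ is reflexive and $p, q, r$ are pairwise incident.

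Second, since $o$ is incident to each of $p, q, r$ by hypothesis, we have $o \in [p q r]$; combining with the display above, $o \in p \mtd q$. Hence all four of $o, p, q, r$ lie in the common plane $p \mtd q$, which is the first assertion. For the second assertion I would argue by contradiction: if some three of $o, p, q, r$ formed a $\upY$-triad, then by the characterization recalled in the excerpt these three lines would fail to be coplanar; but any three of the four lines $o, p, q, r$ lie in the plane $p \mtd q$ just established, contradicting this.

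There is really no substantive obstacle here, since once one observes that $r \in \Sigma_{\mtd}(p, q)$ collapses $[p q r]$ onto the plane $p \mtd q$, both conclusions follow at once from the definitions. The only point worth articulating carefully is that the passage from "incident to each of $p, q, r$" to "lies in the plane $p \mtd q$" relies precisely on the $\mtd$-character of the triad; had the triad been a $\upY$-triad, the analogous conclusion would produce copunctality rather than coplanarity, and this contrast is what drives the exclusion of $\upY$-triads in the second half of the argument.
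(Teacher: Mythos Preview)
Your proof is correct and follows essentially the same route as the paper's: both relabel so that $p,q,r$ is the $\mtd$-triad, and both use that $r \in \Sigma_{\mtd}(p,q)$ forces $o \in p \mtd q$ from the incidence $o \dagg r$. You phrase this directly as $o \in [pqr] = p \mtd q$, while the paper phrases it contrapositively (if $o \in \Sigma_{\upY}(p,q)$ then $o$ would be skew to $r$, so $o \in [pq] \setminus \Sigma_{\upY}(p,q) = p \mtd q$); your treatment of the second assertion, invoking the stated fact that $\upY$-triads are not coplanar, is if anything slightly more complete than the paper's, which handles only the triple $o,p,q$ explicitly.
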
 

\begin{proof} 
Let $p, q, r$ be a $\mtd$-triad: thus, $r \in \Sigma_{\mtd} (p, q)$ and $q \mtd r = r \mtd p = p \mtd q$. It cannot be that $o \in \Sigma_{\upY} (p, q)$ for that would preclude $o \dagg r$, so $o, p, q$ is not a $\upY$-triad; consequently, $o \in [p q] \setminus \Sigma_{\upY} (p, q) = p \mtd q$. 
\end{proof} 

\medbreak

\medbreak 

Thus, the six planes $o \mtd p, \; o \mtd q, \;  o \mtd r, \; q \mtd r, \; r \mtd p, \; p \mtd q$ coincide; also, if $o, p, q, r$ includes another triad then it must be of the $\mtd$ variety. 

\medbreak 

Dually, if four pairwise-incident lines include a $\upY$-triad then they are copunctal and do not include a $\mtd$-triad. 

\medbreak 

We define a {\it tetrad} to be a set of four (pairwise-incident) lines $\{ o, p, q, r \}$ such that each of the four included triples $\{p, q, r \}, \; \{ o, q, r \}, \; \{o, r, p \}, \; \{o, p, q \}$ is a triad. 

\medbreak 

Theorem \ref{pretetrad} implies that, like triads, tetrads come in precisely two types: $\mtd$-tetrads, in which each of the included triples is a $\mtd$-triad; $\upY$-tetrads, in which each of the included triples is a $\upY$-triad. Plainly, $\mtd$-tetrads and $\upY$-tetrads are dual notions. 

\medbreak 

Let $o, p, q, r$ be a $\mtd$-tetrad. Note at once that the points $o \upY r$ and $p \upY q$ are distinct: indeed, $r \in \Sigma_{\mtd} (p, q) = [p q] \setminus (p \upY q)$; likewise, $o \upY q \ne r \upY p$ and $o \upY r \ne p \upY q$. Accordingly, the (coplanar) {\it diagonals} appearing in the next result are well-defined. In the statement, we have taken a harmless liberty: for example, $c = (o \upY r) \cap (p \upY q)$ should strictly read $\{c\} = (o \upY r) \cap (p \upY q)$. 

\medbreak 

\begin{theorem} \label{diagonals}
If $o, p, q, r$ is a $\mtd$-tetrad then its diagonals 
$$a = (o \upY p) \cap (q \upY r), \; \; b = (o \upY q) \cap (r \upY p), \; \; c = (o \upY r) \cap (p \upY q)$$
are distinct from each other and from $o, p, q, r$. 
\end{theorem}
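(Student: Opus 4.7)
The plan is to verify each required inequality by a short direct argument, using only the facts that each of the four triples inside the tetrad is a $\mtd$-triad (hence coplanar but not copunctal) and that, in a plane, two distinct points determine a unique line. By Theorem \ref{pretetrad} applied to any included triple, all of $o, p, q, r$ lie in one common plane $\pi$, and the paragraph preceding the statement already establishes that the two points defining each diagonal are distinct; thus $a$, $b$, $c$ are the unique lines of $\pi$ joining the indicated pairs of points.

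First I would show that $a$ differs from each of $o, p, q, r$ (the arguments for $b$ and $c$ being identical up to relabelling). Suppose $a = o$: then $o$ passes through the point $q \upY r$, i.e.\ $o \in q \upY r$, which forces $o, q, r$ to be copunctal and contradicts the fact that $\{o, q, r\}$ is a $\mtd$-triad. Suppose instead $a = q$: then $q$ passes through $o \upY p$, so $o, p, q$ are copunctal, contradicting the $\mtd$-triad $\{o, p, q\}$. The cases $a = p$ and $a = r$ are handled symmetrically.

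Next I would show that the three diagonals are pairwise distinct. Suppose $a = b$: then this common line passes through all four of the points $o \upY p$, $q \upY r$, $o \upY q$, $r \upY p$. The two points $o \upY p$ and $o \upY q$ both contain $o$, so either they coincide (in which case $o, p, q$ are copunctal, contradicting the $\mtd$-triad $\{o, p, q\}$) or the unique line of $\pi$ joining them is $o$ itself (in which case $a = o$, contradicting the previous step). The comparisons $a = c$ and $b = c$ are handled analogously.

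I do not anticipate a substantial obstacle here: the argument is essentially a translation of the classical fact that the diagonals of a complete quadrangle are distinct from the sides and from each other. The one delicate point is being consistent, within this line-based framework, about the reading of ``$\ell$ passes through the point $x \upY y$'' as the set-theoretic membership $\ell \in x \upY y$, which indeed forces $x, y, \ell$ to be copunctal.
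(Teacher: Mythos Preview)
Your proof is correct and follows essentially the same route as the paper's. Both arguments show first that each diagonal differs from each side by observing that a side, being part of a $\mtd$-triad, cannot lie in the relevant $\upY$-point (the paper writes $o \notin p \upY q \ni c$, you write $a \in q \upY r \not\ni o$), and then show $a \ne b$ by noting that $a = b$ would force the distinct points $o \upY p$ and $o \upY q$ to share the two distinct lines $o$ and $a$, hence to coincide, contradicting that $o, p, q$ are not copunctal. The paper packages the last step as $o \upY a = o \upY p$ and $o \upY b = o \upY q$, while you reach it via a short case split, but the content is the same.
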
 

\begin{proof} 
Note first from $o \notin p \upY q \ni c$ that $o \ne c$; entirely similar arguments show that each of $a, b, c$ differs from each of $o, p, q, r$. Now the points $o \upY c$ and $o \upY r$ share two lines and hence coincide, while $o \upY a = o \upY p$ and $o \upY b = o \upY q$ likewise: as $o, p, q$ are not copunctal, $o \upY p \ne o \upY q$ so that $o \upY a \ne o \upY b$ and therefore $a \ne b$; entirely similar arguments show that $b \ne c$ and $c \ne a$. 
\end{proof} 

\medbreak 

Dually, if $o, p, q, r$ is a $\upY$-tetrad then its (well-defined, copunctal)  diagonals
$$a = (o \mtd p) \cap (q \mtd r), \; \; b = (o \mtd q) \cap (r \mtd p), \; \; c = (o \mtd r) \cap (p \mtd q) $$ 
are distinct from each other and from $o, p, q, r$. 

\medbreak 

The question whether more can be said about the diagonals of a tetrad is addressed in the next section. 

\medbreak 

\section*{Harmonicity} 

\medbreak 

As our axiomatic framework for projective space is based on lines it is more natural to begin rather with a complete quadrilateral and its diagonal lines than with a complete quadrangle and its diagonal points. Thus, in classical projective geometric terms, we prefer to take as our version of the harmonicity axiom the planar dual of Assumption H$_0$: that is, we prefer to assume that the diagonal lines of a complete quadrilateral are not concurrent. Incidentally, we remark that Veblen and Young appear not to state explicitly that this is a consequence of their Assumption H$_0$ and hence that the principle of duality continues to hold after H$_0$ is introduced, though it is implicit in their discussion of the quadrangle-quadrilateral configuration on pages 44-46 of [3].

\medbreak 

Let us examine this version of the harmonicity axiom a little more closely. Recall that a complete quadrilateral is a planar figure comprising four lines, no three of which are concurrent, along with their three diagonal lines. The requirements that the four lines $o, p, q, r$ be coplanar and that no three of them be concurrent may be expressed as follows: if $x, y, z$ are any three of the lines, then $z \in x \mtd y$ (for coplanarity) and $z \notin x \upY y$ (for non-concurrence) so that $z \in \Sigma_{\mtd} (x, y)$. Thus: a complete quadrilateral is precisely a $\mtd$-tetrad $o, p, q, r$ along with its diagonals $a = (o \upY p) \cap (q \upY r), \; b = (o \upY q) \cap (r \upY p), \; c = (o \upY r) \cap (p \upY q)$ as defined previously. In the same spirit, the non-concurrence of $a, b, c$ corresponds to the statement that $a, b, c$ is a $\mtd$-triad. 

\medbreak 

This suggests that we adopt as our axiom of harmonicity the following statement. 

\medbreak 

{\bf [H$_{\mtd}$]} The diagonals of a $\mtd$-tetrad form a $\mtd$-triad. 

\medbreak 

It is natural to consider also the following (spatially) dual statement. 

\medbreak 

{\bf [H$_{\upY}$]} The diagonals of a $\upY$-tetrad form a $\upY$-triad. 

\medbreak 

The following is a theorem in our `linear' axiomatic framework. In the proof, we shall feel free to use classical notions and  notation when convenient, as in [1] and [2]: for example, Theorem 15 of [1] shows that if distinct points $A$ and $B$ lie in a plane then so does the unique line $A B$ that joins them, while Theorem 2 of [2] shows that if the plane $\pi$ does not contain the line $\ell$ then both pass through a unique common point $\pi \cdot \ell$. 

\begin{theorem} 
 {\bf [H$_{\mtd}$]}$\; \Rightarrow \; ${\bf [H$_{\upY}$]}.
\end{theorem}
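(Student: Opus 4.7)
The plan is to argue by contradiction, using a plane section of the $\upY$-tetrad to manufacture a $\mtd$-tetrad in that plane whose three diagonals are forced to concur, in violation of {\bf [H$_{\mtd}$]}.

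Let $o, p, q, r$ be a $\upY$-tetrad with common point $P$ and diagonals $a, b, c$. Each diagonal is the intersection of two planes through $P$, so all three pass through $P$; combined with the dual of Theorem~\ref{diagonals}, this shows that $a, b, c$ are pairwise-incident, copunctal, and distinct. The only thing left to verify is that they are not coplanar, so I suppose for contradiction that they all lie in a common plane $\tau$ (which then contains $P$).

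Next I would pick a plane $\sigma$ not containing $P$ (existence being routine within the framework of [1, 2]) and introduce the four points
$$O = o \cap \sigma, \quad P' = p \cap \sigma, \quad Q = q \cap \sigma, \quad R = r \cap \sigma,$$
each a single point since all four lines pass through the external point $P$. These four points are distinct (else two of the original lines would share two points) and no three are collinear: if, for instance, $O, P', Q$ lay on a line $\ell \subset \sigma$, then $o, p, q$ would all lie in the plane spanned by $P$ and $\ell$, contradicting the fact that $\{o, p, q\}$ is a $\upY$-triad. For each pair of the original lines, $(o \mtd p) \cap \sigma$ is the line $OP'$, whence the diagonal $a = (o \mtd p) \cap (q \mtd r)$ meets $\sigma$ in the point $D_1 := OP' \cap QR$, and similarly $D_2 := b \cap \sigma = OQ \cap P'R$ and $D_3 := c \cap \sigma = OR \cap P'Q$. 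Because $\tau \ne \sigma$, the coplanarity hypothesis places $D_1, D_2, D_3$ on the common line $\tau \cap \sigma$, so they are collinear.

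Finally I would apply {\bf [H$_{\mtd}$]} to the four lines $OP', P'Q, QR, RO$ in $\sigma$. They are coplanar and no three of them are concurrent (by the general position of $O, P', Q, R$), so they form a $\mtd$-tetrad; unpacking the diagonal construction of Theorem~\ref{diagonals} identifies its three diagonals as $P'R$, $D_1 D_3$, and $OQ$. Since $P'R \cap OQ = D_2$, the collinearity of $D_1, D_2, D_3$ forces $D_2 \in D_1 D_3$, making these three diagonals concurrent at $D_2$ — contradicting {\bf [H$_{\mtd}$]}. I expect the main technical hurdle to be the plane-section bookkeeping inside the axiomatic framework: producing the plane $\sigma$ missing $P$, showing that each line through $P$ meets $\sigma$ in exactly one point, and verifying the identification $(o \mtd p) \cap \sigma = OP'$, all of which should be secured by direct appeal to results in [1] and [2] rather than by any coordinatization.
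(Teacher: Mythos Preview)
Your argument is correct and shares the paper's overall strategy: section the $\upY$-tetrad by a plane missing the common point, and apply {\bf [H$_{\mtd}$]} to a planar quadrilateral arising from the section. The difference lies in which quadrilateral is chosen. You use the four sides $OP',\,P'Q,\,QR,\,RO$ of the projected quadrangle, whose diagonals are $OQ$, $P'R$, and $D_1D_3$; under your reductio hypothesis these concur at $D_2$. The paper (writing $Z$ for the common point and $O,P,Q,R,A,B,C$ for the traces of $o,p,q,r,a,b,c$ on a plane $\zeta \not\ni Z$) instead applies {\bf [H$_{\mtd}$]} to the quadrilateral $AP,\,PB,\,BQ,\,QA$, whose three diagonals are precisely the sections $\zeta \cap (p \mtd q)$, $\zeta \cap (r \mtd o)$, and $\zeta \cap (a \mtd b)$; their non-concurrence then yields $c \notin a \mtd b$ directly, with no reductio needed. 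Your version in effect routes through the classical H$_0$ (diagonal points of a complete quadrangle are non-collinear) as an intermediate step, which is perfectly sound but one move less direct than the paper's choice of quadrilateral.
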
 

\begin{proof} 
Assume {\bf [H$_{\mtd}$]}. Let $o, p, q, r$ be a $\upY$-tetrad with diagonals $a, b, c$: all seven lines pass through a common point $Z = p \upY q$; no three of the lines $o, p, q, r$ are coplanar. Choose a plane $\zeta$ that does not pass through $Z$ and denote the points in which $\zeta$ meets each of the seven lines by the corresponding upper case letter: thus, $O = \zeta \cdot o , \dots , C = \zeta \cdot c$. Now, consider the quadrilateral $A P B Q$ in the plane $\zeta$: its four sides are $A P, P B, B Q, Q A$ and its three diagonals are $P Q, R O, A B$. 

\medbreak\noindent
{\it Claim}:  the four lines $A P, P B, B Q, Q A$ constitute a $\mtd$-tetrad. \par
[As the four lines are coplanar, we need only show that no three are concurrent. By symmetry, we need only show that neither $B$ nor $Q$ lies on the line 
$$A P = \zeta \cap (a \mtd p ) = \zeta \cap (o \mtd p).$$
Suppose that $B$ lies on $A P$: then $o \mtd p$ passes through both $Z$ and $B$ and hence contains $Z B = b = (o \mtd q) \cap (r \mtd p)$; now $\{ o, b \} \subseteq (o \mtd p) \cap (o \mtd q)$ so that $o \mtd p = o \mtd q$ and $o, p, q$ are coplanar, contrary to hypothesis. The supposition that $Q$ lies on $A P$ immediately places $q$ in $o \mtd p$ and again contradicts the non-coplanarity of $o, p, q$.]

\medbreak

Now {\bf [H$_{\mtd}$]} renders the diagonals of the quadrilateral $A P B Q$ non-concurrent: that is, the lines
$$P Q = \zeta \cap (p \mtd q), \; R O = \zeta \cap (r \mtd o), \; A B = \zeta \cap (a \mtd b)$$
have no point in common. It follows that the planes $p \mtd q, \; r\mtd o, \; a \mtd b$ are not collinear: indeed, if the line $\ell$ were common to each of these planes then the point $\zeta \cdot \ell$ would lie on each of the lines $P Q, R O, A B$. As $(p \mtd q) \cap (r \mtd o) = \{c\}$ we deduce that $c \notin a \mtd b$ and conclude that $c \in \Sigma_{\upY} (a, b)$: so $a, b, c$ is a $\upY$-triad. 
\end{proof} 

\medbreak 

The principle of duality holding in our axiomatic framework, it follows that the dual of {\bf [H$_{\mtd}$]}$\; \Rightarrow \; ${\bf [H$_{\upY}$]} is also a theorem; of course, this dual is precisely the converse {\bf [H$_{\upY}$]}$\; \Rightarrow \; ${\bf [H$_{\mtd}$]}. Consequently, the dual statements {\bf [H$_{\mtd}$]} and {\bf [H$_{\upY}$]} are equivalent: if we adopt the one as an additional axiom then the other becomes a theorem and the crucial principle of duality is preserved. Alternatively, we may combine the two and adopt as our harmonicity axiom the following (equivalent) self-dual version. 

\medbreak 

AXIOM {\bf [H]}: The diagonals of a tetrad form a triad of like type.

\medbreak 

We now come full circle: on the basis of {\bf [H]} we establish as a theorem the assertion with which we opened this paper. 

\medbreak 

\begin{theorem} 
The diagonal points of a complete quadrangle are not collinear. 
\end{theorem}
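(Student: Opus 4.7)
The plan is to reduce the claim to Axiom \textbf{[H]} in its $\upY$ form by projecting the quadrangle from a point off its plane. Let $A, B, C, D$ be the four vertices (no three collinear) of a complete quadrangle lying in a plane $\pi$, with diagonal points $P = AB \cap CD$, $Q = AC \cap BD$, $R = AD \cap BC$. I would then choose a point $O$ not in $\pi$ (available by the three-dimensionality of the framework) and set $o = OA$, $p = OB$, $q = OC$, $r = OD$.

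First I would verify that $\{o, p, q, r\}$ is a $\upY$-tetrad. The four lines are pairwise incident and share the point $O$; moreover, no three of them are coplanar, since otherwise three of $A, B, C, D$ would be collinear in $\pi$. Each included triple is therefore a $\upY$-triad.

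Next I would identify the three diagonals of this $\upY$-tetrad with the lines joining $O$ to the diagonal points of the quadrangle. The plane $o \mtd p$ is $OAB$ and cuts $\pi$ in the line $AB$, while $q \mtd r$ is $OCD$ and cuts $\pi$ in $CD$; hence the diagonal $a = (o \mtd p) \cap (q \mtd r)$ passes through $O$ and meets $\pi$ at $AB \cap CD = P$, so $a = OP$. Entirely similar arguments give $b = OQ$ and $c = OR$.

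Axiom \textbf{[H$_{\upY}$]} (a consequence of \textbf{[H]}) now asserts that $a, b, c$ form a $\upY$-triad: they are copunctal at $O$ but not coplanar. Were $P, Q, R$ collinear on some line $\ell \subset \pi$, then $OP, OQ, OR$ would all lie in the plane spanned by $O$ and $\ell$, contradicting the non-coplanarity of $a, b, c$. The substantive step in the plan is the translation from the planar quadrangle to the spatial $\upY$-tetrad, namely recognizing that its diagonals are precisely the joins of $O$ to the diagonal points; once this is in place, the conclusion follows immediately from \textbf{[H$_{\upY}$]}.
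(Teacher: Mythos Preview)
Your proposal is correct and follows essentially the same approach as the paper: project the quadrangle from a point off its plane to obtain a $\upY$-tetrad, identify its diagonals with the joins to the diagonal points, and apply {\bf [H$_{\upY}$]}. You supply more detail than the paper does---explicitly verifying the $\upY$-tetrad condition and computing each diagonal $(o \mtd p) \cap (q \mtd r)$ as the line through the apex and the corresponding diagonal point---but the structure of the argument is identical.
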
 

\begin{proof} 
Let $O, P, Q, R$ be the vertices and $A = OP.QR, \; B = OQ.RP, \; C = OR.PQ$ the diagonal points of a complete quadrangle. Fix a point $Z$ off the plane of the quadrangle and join $Z$ to each of the seven other points, thereby obtaining $o = ZO, \dots , c = Z C$. The lines $o, p, q, r$ form a $\upY$-tetrad with the lines $a, b, c$ as its diagonals. Axiom {\bf [H]} declares that $a, b, c$ form a $\upY$-triad and are therefore not coplanar, whence $A, B, C$ are not collinear. 
\end{proof} 

\medbreak 

\bigbreak

\begin{center} 
{\small R}{\footnotesize EFERENCES}
\end{center} 
\medbreak 

[1] P.L. Robinson, {\it Projective Space: Lines and Duality}, arXiv 1506.06051 (2015). 

\medbreak 

[2] P.L. Robinson, {\it Projective Space: Reguli and Projectivity}, arXiv 1506.08217 (2015). 

\medbreak 

[3] O. Veblen and J.W. Young, {\it Projective Geometry}, Volume I, Ginn and Company, Boston (1910).

\medbreak

\end{document}